\titleformat*{\section}{\large\bfseries}
\numberwithin{equation}{section}
\newtheorem{theorem}{Theorem}[section]
\theoremstyle{definition}
\newtheorem{definition}[theorem]{Definition}
\newtheorem{Ex}[theorem]{Example}
\newtheorem{Rem}[theorem]{Remark}
\newtheorem{algo}[theorem]{Algorithm}
\renewcommand{\o}{\omega}
\newcommand{\Pe}{\Phi_\varepsilon}
\newcommand{\vb}{\mathbf{v}}
\newcommand{\Jinf}{J^{\infty}}
\newcommand{\Gt}{\widetilde{\G}}
\renewcommand{\Xi}{X^i}
\newcommand{\XiK}{X^i_K}
\newcommand{\XiKj}{X^i_{K,j}}
\newcommand{\uaJ}{u^\alpha_J}
\newcommand{\uaJi}{u^\alpha_{J,i}}
\newcommand{\ua}{u^\alpha}
\newcommand{\muiK}{\mu^i_K}
\newcommand{\muiKj}{\mu^i_{K,j}}
\newcommand{\ebo}{\pmb{\eta}}
\newcommand{\upaj}{\Upsilon^\alpha_J}
\newcommand{\upiK}{\Upsilon^i_K}
\newcommand{\upjJ}{\Upsilon^j_J}
\newcommand{\up}{\Upsilon}
\newcommand{\upa}{\Upsilon^\alpha}
\newcommand{\upi}{\Upsilon^i}
\renewcommand\aa{\"a}
\newcommand\OO{\"O}
\newcommand{\jpx}{ j^\infty\varphi\restrict{x}}
\newcommand\jinf{j^\infty}
\newcommand{\intc}{\int\!\!}
\newcommand\ak{\alpha^\kappa}
\newcommand{\gb}{\bar{g}}
\renewcommand{\th}{^{\text{th}}}
\newcommand{\xb}{\bar{x}}
\newcommand{\p}{\varphi}
\newcommand{\ep}{\varepsilon}
\newcommand\Rtwo{R_{j^2\p}}
\newcommand\Rone{R_{j^1\p}}
\newcommand{\Nn}{\mathbb{N}_0^n}
\newcommand{\Nz}{\mathbb{N}_0}
\newcommand{\A}{\mathcal{A}}
\newcommand{\N}{\mathcal{N}}
\newcommand{\E}{\mathcal{E}}
\newcommand{\D}{\mathcal{D}}
\newcommand{\G}{\mathcal{G}}
\renewcommand{\S}{\mathcal{S}}
\newcommand{\T}{\mathcal{T}}
\newcommand{\Rr}{\mathcal{R}}
\newcommand{\R}{\mathbb{R}}
\newcommand{\xbf}{\mathbf{x}}
\newcommand{\dis}{\displaystyle}
\newcommand{\resx}{\restrict{x}}
\newcommand\one{\mathbbm{1}}
\newcommand{\bpm}{\begin{pmatrix}}
\newcommand{\epm}{\end{pmatrix}}
\newcommand{\bbm}{\begin{bmatrix}}
\newcommand{\ebm}{\end{bmatrix}}
\newcommand{\beq}{\begin{equation}}
\newcommand{\eeq}{\end{equation}}
\newcommand{\bex}{\begin{Ex}}
\newcommand{\eex}{\end{Ex}}
\newcommand{\bre}{\begin{Rem}}
\newcommand{\qv}{^{(q)}}
\def\restrict#1{\raise-.3ex\hbox{\scriptsize\ensuremath|}_{#1}}
\def\restrictbig#1{\raise-.3ex\hbox{\large\ensuremath|}_{#1}}
\def\irestrict#1{\raise+.7ex\hbox{\scriptsize\ensuremath|}^{#1}}
\def\comp{\raise 1pt \hbox{$\,\scriptstyle\circ\,$}}
\def\interior{\mathbin{\hbox{\hbox{{\vbox
    {\hrule height.4pt width6pt depth0pt}}}\vrule height6pt width.4pt depth0pt}\,}}
\title{\normalsize\bf Termination of Cartan's equivalence method}
\author{\normalsize\OO rn Arnaldsson}
\date{} % Activate to display a given date or no date (if empty),
\begin{document}
%\noindent
\maketitle
%\tableofcontents

\begin{abstract}
We apply the language of the groupoid approach to Lie pseudo-groups, and the classical Cartan-Kuranishi theorem, to prove that Cartan's equivalence method terminates at involution (or at complete reduction) for constant type problems.  
\end{abstract}

\begin{section}{Introduction}

The equivariant moving frame for pseudo-groups, developed by Olver and Pohjanpelto in \cite{olver05, olver08moving, olver09}, introduced a jet-coordinate based language for these infinite dimensional analogs of Lie groups. The space of jets of pseudo-group elements is a groupoid which Olver and Pohjanpelto used to define Maurer-Cartan forms for Lie pseudo-groups, a key step towards a moving frame for pseudo-groups. In his PhD thesis, \cite{IMF}, the author used the above mentioned works to harmonize Cartan's equivalence method and the equivariant moving frame for pseudo-groups (in cases where both methods are applicable). The result, \emph{involutive moving frames}, enjoys the best each method has to offer; the geometry of Cartan's theory of involution for exterior differential systems and the \emph{recurrence formula} of the moving frame. See \cite{IMF} for many examples of these computational advantages.

Cartan's equivalence method is a powerful tool in differential geometric application for deciding when two geometric structures are equivalent under a change of variables. The geometric structure can be a differential equation (\cite{Bryant95co, Kamran89,
Kamran87}), a variational problem (\cite{Bryant87,  Gardner85, Hsu89,
Kamran91}) a dynamical system (\cite{Gardner83}) or a system of polynomials (\cite{olver90, Olver99}), to name a few examples. The method proceeds by a series of prolongations and projections ended when a system of differential forms is in involution and proving that this is a finite process has been of obvious interest since the inception of the method in \cite{Car53}. However, whether the method terminates at involution (or at complete reduction) can be a confusing matter to gauge from the literature. In \cite[p.2]{bryant95}, written by experts on Cartan's method, the authors state that as far as they know ``... the general result that the construction is a finite process has never really been proven''. In some texts it is claimed that termination of the method should follow by the classical Cartan-Kuranishi completion theorem for sufficiently regular pdes but no references are given for an actual proof (\cite{olver95, sharpe00}). This author has not been successful in tracking down an explicit proof that Cartan's method terminates. 

This paper is an offspring of \cite{IMF}, and in it we present a complete proof of termination of Cartan's method founded upon the groupoid language of \cite{olver05, olver08moving, olver09}. The key difficulty is in connecting the prolonged spaces of Cartan to standard jet spaces where Cartan-Kuranishi holds. The language of groupoids and the theory of Lie pseudo-groups provides a natural bridge between the two worlds. 

In Section \ref{CK} we recall the classic Cartan-Kuranishi theorem on completion of pdes and in Section \ref{CE} we first recall the groupoid approach to pseudo-groups before going carefully through Cartan's equivalence method, and all its twists and turns, to show that it is really just the standard Cartan-Kuranishi completion algorithm written in Cartan's beautiful geometric language.
 
\end{section}

\begin{section}{Cartan-Kuranishi completion}\label{CK}
In this section we recall the Cartan-Kuranishi completion of (sufficiently regular) pdes. Here, and in the rest of the paper, all diffeomorphisms, differential equations and maps are assumed real-analytic. This is necessary since we need the Cartan-K\aa hler theorem to guarantee local solvability of (well behaved) formally integrable equations, which requires analyticity but all our constructions otherwise work in the smooth category. Let $\E$ be the trivial bundle $\R^n\times\R^m\to\R^n$ in coordinates $x\in \R^n$, $u\in \R^m$, and let $J^q(\E)$ be the space of $q$-jets of sections of $\E$ for $0\leq q\leq\infty$. We denote the elements of $J^q(\E)$ by $j^qu\resx$ or $(x,u\qv)$. 

Consider a $q\th$ order differential equation on $\E$, \beq\label{F}F(x,u\qv)=0.\eeq We denote the set of points in the $q\th$ order jet space that satisfy the equation (\ref{F}) by $\Rr_q\subset J^q(\E)$. 

\bre
We shall refer both to the equations (\ref{F}) and the subset $\Rr_q\subset J^p(\E)$ that they determine as a differential equation. 
\end{Rem}

We can prolong the set (\ref{F}) of equations to order $q+1$ by adjoining to (\ref{F}) all the equations 
\beq\label{F1}
D_{1}F=0,\ldots, D_{n}F=0,
\eeq
and obtain the set $\Rr_{q,1}\subset J^{q+1}(\E)$, where $D_i$ is the total derivative operator on $\Jinf(\E)$,
\[
D_i=\frac{\partial}{\partial x^i}+\sum_{|J|\geq0}\uaJi\frac{\partial}{\partial \uaJ}.
\] 
Note that every local solution to (\ref{F}) must also satisfy the prolonged equation. An integrability condition appearing when going from $\Rr_q$ to $\Rr_{q,1}$ is an equation of order at most $q$ that is an algebraic consequence of the equations (\ref{F1}) but not an algebraic consequence of the equations (\ref{F}). That is, it is a new equation of order (at most) $q$ that solutions to (\ref{F}) must satisfy and should be added to (\ref{F}). The set of points in $\Rr_q$ that also satisfy these integrability conditions is denoted $\Rr^{(1)}_{q,1}$. We can describe the set $\Rr^{(1)}_{q,1}$ using the canonical projections $\pi^{p}_t:J^{p}(\E)\to J^{t}(\E)$, $0\leq t\leq p\leq \infty$, between the jet spaces:
\[
\Rr^{(1)}_{q,1}=\pi^{q+1}_q(\Rr_{q,1}).
\]
Note that the presence of integrability conditions is equivalent to the condition that $\Rr^{(1)}_{q,1}\subsetneq \Rr_q$.

More generally, adjoining all prolongations of (\ref{F}) of order $t$, $D_JF=0$, $J\in\Nn$ with $|J|=t$, we arrive at the set $\Rr_{q,t}\subset J^{q+t}(\E)$. We denote the projection $\pi^{q+t}_{q+t-s}(\Rr_{q,t})$ by $\Rr^{(s)}_{q,t}\subset\Rr_{q,t-s}$. The differential equation $\Rr_q$ is \emph{formally integrable} if $\Rr^{(s)}_{q,t}=\Rr_{q,t-s}$ for all $t\geq s$.

For a $q\th$ order equation $\Rr_q\subset J^q(\E)$, given by a system (\ref{F}), we shall write $\Rr_\infty$ for the set of points in $\Jinf(\E)$ that satisfy (\ref{F}) and \emph{all} its prolongations (and hence all integrability conditions of all orders). 

\begin{Rem}\label{rem:degenerate}
To prevent too much degeneracy in our differential equations, we assume that for all differential equations we encounter that the full system $\Rr_\infty\subset\Jinf(\E)$ is fibered over all of $\R^n$, i.e. the system does not impose any restrictions on the independent variables alone. We shall refer to such differential equations as \emph{genuine differential equations}.
\end{Rem}

\begin{Rem}\label{rem:qcomplete}
If $\Rr_q$ is given by equations (\ref{F}) and some equations are of order strictly less that $q$ we ``complete'' (\ref{F}) to an equivalent $q\th$ order system in the following way. Let $\S_{q-1}\subset J^{q-1}(\E)$ be the system determined by the equations in (\ref{F}) of order strictly less that $q$. Prolong all these equations to order $q$ to obtain $\S_q$ and replace $\Rr_q$ by $\S_q\cap\Rr_q$. Now repeat this process, prolonging each equation of order $<q$ to order $q$, and so forth, until we no longer obtain new equations. This new system has an important property; if $F_j(x,u^{(q-s)})=0$ is an equation in $\S_q\cap\Rr_q$, then any prolongation of it, $D_JF_j=0$, $|J|\leq s$, appears as an equation in $\S_q\cap\Rr_q$. Assume $\Rr_q$ is complete in the above way and consider a point $(x_0,u\qv_0)\in\Rr_q$ and let $\p$ be a local solution to the equations of order exactly $q$ only, but such that $j^q\p\restrict{x_0}=(x_0,u\qv_0)$. Then we have, for any equation $F_j(x,u^{(q-1)})=0$ in $\Rr_q$ of order $q-1$ that, for all $i$,
\[
D_i\left(F_j(j^{q-1}\p\resx)\right)=0\quad\text{and}\quad F_j(j^{q-1}\p\restrict{x_0})=0.
\]
But this means that $\p$ is a local solution to all equations in $\Rr_q$ of order $q-1$, $F_j(j^{(q-1)}\p\resx)=0$. Similarly, $\p$ is a local solution to all the lower order equations in $\Rr_q$. This means that for questions of local solvability it is sufficient to consider systems $\Rr_q$ that are determined by equations of order exactly $q$ only. We assume all differential equations $\Rr_q$ have been completed in this way. 
\end{Rem}

%\begin{Rem}\label{rem:global}
%We should mention that the issues alluded to in the last remark only pose a problem \emph{globally}. Locally, each equation $\Phi(x,u^{(q)})=0$ can be put in \emph{solved form},
%\[
%u^\alpha_J=\Psi(x,u^{(q)}),
%\]
%and restricting to the (dense, open) subset of $\Rr_q$ where it, and all the systems in the decreasing sequence constructed in Remark \ref{rem:qcomplete}, can be written in solved form obviously bypasses any issues of termination of this sequence. 
%\end{Rem}

\begin{Rem}\label{rem:hilbert}
Let $\Rr_q$ be a differential equation and let $\jinf u\resx\in\Rr_\infty$. An integrability condition appearing in $\Rr_{q,t}^{(s)}\subsetneq \Rr_{q,t-s}$ enlarges the symbol module of the original $\Rr_q$ at $\pi^\infty_q(\jinf u\resx)$ and by the Hilbert basis theorem we will eventually stop finding such integrability conditions and the symbol module stabilizes at each point. To make sure that this happens at the same time for all points in $\Rr_\infty$ we make the following regularity assumption on all differential equations in this paper: For all $s\leq q+t$, $\Rr_{q,t}^{(s)}$ is a \emph{submanifold} in $J^{q+t-s}(\E)$ and there is a $p^*$ such that the fibers of
\[
\Rr_{q,t}^{(s)}\to\pi^{q+t-s}_{p^*}\left(\Rr_{q,t}^{(s)}\right)
\]
have constant dimension for all $s,t$ such that $q+t-s\geq p^*$. This will prevent the degeneracy mentioned above since, above order $p^*$, the symbol modules have the same homogeneous dimensions at different points in the differential equation. 

This will also guarantee that the completion process of a differential equation to some order $q$ from Remark \ref{rem:qcomplete} will terminate with a system with the desired properties as long as $q$ is at least as large as $p^*$.  
\end{Rem}

\begin{Rem}\label{rem:solved}
We shall, without loss of generality, that all of our equations $\Rr_q$ have been written in \emph{solved form}. This means that each defining equation has the form $\ua_K=F^\alpha_K(x,u^{(q)})$ and no $\ua_K$ appearing in a left hand side of such an equation appears in a right hand side. The $\ua_K$ are the \emph{principal derivatives}, while their complementary jet coordinates are \emph{parametric derivatives}. For regular systems (see Definition \ref{regular} below) this can always be achieved locally, and since our interest is only in local solvability, this is no real restriction. 
\end{Rem}

As it stands, the regularity hypothesis described in the above remark are difficult to check, but we can give a necessary condition based on \emph{reduced Cartan characters} which we now recall. Let $\Rr_q\overset{\iota}{\hookrightarrow} J^q(\E)$ be a differential equation and denote the standard contact forms on $\Jinf(\E)$ by $\upaj:=d\uaJ-u^\alpha_{J,i}dx^i$, $J\in\Nn$. The contact codistribution on $\Rr_q$ is generated by the restriction (or pull-back) to $\Rr_q$ of
\[
\{\upaj~|~|J|<q\}\quad\overset{\iota^*}{\to}\quad\{\upaj\restrict{\Rr_q}~|~|J|<q\}
\]
and we define the projection $\gamma_q:T^*\Rr_q\to T^*\Rr_q$ onto to subspace generated by the $d\ua_K$ for $|K|=q$. The reduced Cartan characters of $\Rr_q$ are computed as follows. First maximize (over all $(a_1^1,\ldots, a_1^n)\in\R^n$) the rank of the set 
\[
\gamma_q\left(\{(\sum_ia_1^iD_i\interior d\upaj\restrict{\Rr_q}~|~|J|=q-1\}\right)
\]
of one-forms, where the $\upaj$ are all restricted to $\Rr_q$. This gives the first reduced Cartan character $s^q_1$. Having computed the first $k$ reduced Cartan characters $s^q_1,\ldots, s^q_k$, we maximize the rank of the one-forms
\[
\gamma_q\left(\{(\sum_ia_1^iD_i\interior d\upaj\restrict{\Rr_q}~|~|J|=q-1\}\cup\dots\cup\{(\sum_ia_{k+1}^iD_i\interior d\upaj\restrict{\Rr_q}~|~|J|=q-1\}\right)
\]
over all collections of  $k+1$ vectors $(a_1^1,\ldots, a_1^n),\ldots, (a_{k+1}^1,\ldots, a_{k+1}^n)$ in $\R^n$ to obtain  the number $r_{k+1}$. The $k+1$ reduced Cartan character of $\Rr_q$ is 
\beq\label{eq:sqk}
s^q_{k+1}=r_{k+1}-s^q_k-\dots-s^q_1.
\eeq

The \emph{symbol} of $\Rr_q$ is involutive if the reduced Cartan characters satisfy Cartan's test, i.e. if the fiber dimension of the projection $\pi^{q+1}_q:\Rr_{q,1}\to\Rr_q$ is equal to the sum
\[
\sum is^q_i.
\]
We denote this fiber dimension by $r^{q+1}$, and note that it is equal to the number of \emph{parametric derivatives} of order $q+1$ in the system $\Rr_{q,1}$ (for any choice of principal/parametric derivatives). Since, in the process of completing a differential equation to involution one must compute reduced Cartan characters we further restrict to differential equations having \emph{constant} reduced Cartan characters above a certain order.

\begin{definition}\label{regular}
We say that $\Rr_q$ is \emph{regular} if for all $s\leq q+t$, $\Rr_{q,t}^{(s)}$ is a submanifold in $J^{q+t-s}(\E)$ and there is a $p^*$, called the \emph{regularity order} of $\Rr_q$, such that the systems $\Rr_{q,t}^{(s)}$, for all $s,t$ such that $q+t-s\geq p^*$, have constant reduced Cartan characters $s^{q+t-s}_i$.
\end{definition}

It is easy to see that this notion of regularity guarantees the desired properties of Remark \ref{rem:hilbert}. 

\bex\label{ex:abel1}
Consider the second order system, $\Rr_2$, of differential equations for maps 
\[
(x,y,u)\mapsto(X(x,y,u), Y(x,y,u), U(x,y,u))
\]
given by the first order equations
\beq\label{eq:abel1}
\begin{gathered}
X=x,\quad Y=y,\quad U=u+xU_x+yU_y,\\
X_x=Y_y=U_u=1,\quad X_y=X_u=Y_x=Y_u=0
\end{gathered}
\eeq
and the trivial second order equations
\[
X_{ij}=Y_{ij}=U_{ij}=0,\quad\text{for all}~ i,j\in\{x,y,u\}.
\]
Obviously $\Rr^{(1)}_2$, given by the equations (\ref{eq:abel1}), is a manifold but the symbol of $\Rr^{(1)}_2$ is non regular since for $x=y=0$ the equation 
\[
U=u+xU_x+yU_y
\]
drops in degree. However, this system is regular with regularity order $p^*=2$, above which these problems obviously do not arise. 
\eex

\begin{definition}
A formally integrable differential equation $\Rr_q\subset\Jinf(\E)$ is \emph{involutive} if it has constant $q\th$ order reduced Cartan characters and its symbol is involutive at all points in $\Rr_q$.  
\end{definition}
By the Cartan-K\aa hler theorem involutive equations are locally solvable and a solution depends on $s^q_i$ free functions of $i$ variables.

At first sight, formal integrability (and hence involutivity) looks like a condition that can only be affirmed by checking \emph{all} prolongations of $\Rr_q$, but fortunately there exists a \emph{finite} process for obtaining a formally integrable equation from the initial set $\Rr_q$ if $\Rr_q$ is regular. The following theorem is the key (see \cite{s10}).

\begin{theorem}\label{forminv}
Let $\Rr_q$ be a $q\th$ order, regular, differential equation whose symbol is involutive and assume that $\Rr^{(1)}_{q,1}=\Rr_q$, then $\Rr_q$ is formally integrable.
\end{theorem}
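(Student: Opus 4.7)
The plan is to induct on $k\geq 0$ to show that $\Rr^{(1)}_{q+k,1}=\Rr_{q+k}$, i.e.\ that each one-step projection $\pi^{q+k+1}_{q+k}:\Rr_{q+k,1}\to\Rr_{q+k}$ is surjective. The base case $k=0$ is precisely the hypothesis. Granted this chain of surjectivities at every level, a routine diagram chase, factoring a general projection $\pi^{q+t}_{q+t-s}$ through its one-step constituents, yields $\Rr^{(s)}_{q,t}=\Rr_{q,t-s}$ for all $t\geq s$, which is formal integrability by definition.

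The inductive step rests on two standard algebraic facts about polynomial symbols that I would use as black boxes. First, prolongation preserves involutivity: once the symbol $g_q$ of $\Rr_q$ is involutive, every prolonged symbol $g_{q+k}$ is involutive, the reduced Cartan characters $s^{q+k}_i$ are determined explicitly by those of $g_q$, and the fiber dimension $r^{q+k+1}$ of $\pi^{q+k+1}_{q+k}$ equals the Cartan-test sum $\sum_i i\,s^{q+k}_i$. Regularity (Definition \ref{regular}) guarantees that these dimension counts are in fact constant on $\Rr_{q+k}$, so the count is uniform. Second, involutivity implies the vanishing of the Spencer $\delta$-cohomology group $H^{q+k,2}(g)$ in which the obstruction to prolongation surjectivity at that order naturally lives — this is the cohomological face of Cartan's test. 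Granted both, the inductive step is: assume $\pi^{q+k+1}_{q+k}$ is surjective; the obstruction to the surjectivity of $\pi^{q+k+2}_{q+k+1}:\Rr_{q+k+1,1}\to\Rr_{q+k+1}$ can be encoded as a $\delta$-cocycle built from the defining functions of $\Rr_q$ differentiated to the appropriate orders, whose class lies in $H^{q+k,2}(g)$ and is therefore killed by involutivity. Hence the next projection is surjective and the induction continues.

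The main obstacle is the careful construction of that obstruction cocycle inside Cartan's very concrete setup, and the verification that its class really is the Spencer class that involutivity kills: one has to translate the geometric projection $\Rr_{q+k+1,1}\to\Rr_{q+k+1}$ into algebraic data about the defining functions $F^\alpha_K$ and their total derivatives. A more elementary, if more combinatorial, route avoids Spencer cohomology altogether by working in $\delta$-regular coordinates and tracking parametric-derivative counts: Cartan's test together with the inductive hypothesis pins down the fiber dimension at the next order, and any strict drop in rank would propagate backwards to contradict the previous step. Either way, the theorem is essentially Goldschmidt's classical result on formal integrability, worked out for this setting in Seiler \cite{s10}; I would cite that reference rather than reconstruct the argument in full here.
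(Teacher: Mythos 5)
The paper does not actually prove this theorem: it is stated as a classical result with the citation ``(see \cite{s10})'', i.e.\ Goldschmidt's formal integrability criterion as presented by Seiler. Your outline --- induction on the prolongation order, with the one-step surjectivity obstruction living in the Spencer cohomology group $H^{q+k,2}$ that involutivity kills, regularity keeping the dimension counts uniform, and a final factorization of $\pi^{q+t}_{q+t-s}$ through one-step projections --- is precisely the standard argument of that reference, so your proposal is correct and takes the same route the paper implicitly relies on.
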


Graphs of solutions to $\Rr_q$ annihilate the contact forms $\upaj\restrict{\Rr_q}$, $|J|<q$, and so must also annihilate their exterior derivative. When looking for integrability conditions in $\Rr^{(1)}_{q,1}$, the ``intrinsic'' alternative to taking total derivatives $D_i$ of the defining equations of $\Rr_q$ is to compute the exterior derivative of all $\upaj\restrict{\Rr_q}$, $|J|<q$. Since the systems arising in Cartan's equivalence method, to be treated in the next section, are all order \emph{one} we describe this intrinsic method in this simpler case. Assume, then, that we have a first order system $\Rr_1$. The contact codistribution on $\Rr_1$ is generated by the zero order contact forms $\upa$.  We have  
\beq\label{eq:dup}
d\upa\restrict{\Rr_1}=\sum_id\ua_{i}\restrict{\Rr_1}\wedge dx^i,
\eeq
where, to compute $d\ua_{i}\restrict{\Rr_1}$ we replace each principal derivative $\ua_k$ by the corresponding right hand side $F^\alpha_k$ (see Remark \ref{rem:solved}). In $dF^\alpha_k(x,u^{(1)})$ we obtain a linear combination of $du^\beta_l$, where $u^\beta_l$ are parametric, $dx$'s and $d\ua$. The last of these we write
\[
d\ua=\upa+\sum\ua_idx^i
\]
as we are only interested in $d\upa\restrict{\Rr_1}$ modulo contact forms on $\Rr_1$. We introduce second order jet coordiantes $z^\beta_{l,i}$ (where we do no longer identify $z^\beta_{l,i}$ and $z^\beta_{i,l}$) and set 
\beq\label{eq:uz}
du^\beta_l=\left(du^\beta_l-\sum_iz^\beta_{l,i}dx^i\right) + \sum_iz^\beta_{l,i}dx^i,
\eeq
where the part in parenthesis completes the $\upa$ to a basis of contact forms on $\Rr_{1,1}$. Plugging this into (\ref{eq:dup}) and setting to zero we find a whole host of \emph{linear} (non-homogeneous) equations for the new jet coordinates $z^\beta_{l,i}$, we shall call \emph{the absorbtion equations}. Next we solve for as many $z$'s as we can (we call those we solve for principal) and the $z$'s we do \emph{not} manage to solve for (which we call parametric) will serve to parametrize, along with $(x,u^{(1)})$, $\Rr_{1,1}$. If, after solving some equations for $z$'s, we find an equation depending only on the 1-jets $(x,u^{(1)})$ we have found an integrability condition which must be adjoined to $\Rr_1$. If this does not happen, we compute reduced Cartan characters and perform Cartan's test for involutivity where the fiber dimension $r^2$ is equal to the number of free $z$'s in the absorbtion equations. If Cartan's test is not satisfied, we repeat this process starting with $\Rr_{1,1}$ instead of $\Rr_1$. The contact codistribution on $\Rr_{1,1}$ in the coordinates $(x,u^{(1)}, z)$ is given by the $\up$'s along with
\[
du^\beta_l- z^\beta_{l,i}dx^i
\]
where we replace principal $z$'s with parametric $z$'s. 

The above procedure is of course well known and it is easy to see it is equivalent to the ``extrinsic'' approach of prolonging equations by using the total derivatives $D_i$. Also, as a consequence of $d(dx^i)=0$ we find that indeed $z^\beta_{i,l}=z^\beta_{i,l}$. But the coordinate one-forms $dx^i$ and the standard contact forms $\upa=d\ua-\ua_i dx^i$ are just one choice of forms with which to work in $\Rr_1$. The above process could just as well be performed for some invertible linear combination of the $\upa$. Similarly, instead of the coordinate forms $dx^1,\ldots, dx^n$, we could choose one-forms $\o^1, \ldots, \o^n$ as long as at each point $p\in\Rr_1$ they generate the same subspace of $T^*_p\Rr_1$ as the horizontal forms $dx^1,\ldots, dx^n$. The expansion (\ref{eq:uz}) would then be
\[
du^\beta_l-\sum_iz^\beta_{l,i}\o^i,
\]
but everything else goes through as before. For a thourough investigation of these matters, we refer to \cite{s10}.

Here then is the classical algorithm for completing sufficiently regular differential equations to involution. 

%\begin{algo}\label{ckalgo}
%~
%\begin{enumerate}[(a)]
%\item First check if $\Rr^{(1)}_{q,1}=\Rr_q$ by computing all $d\upaK$ on $\Rr_q$ for $|K|=q-1$. If not, then replace $\Rr_q$ by $\Rr_{q,1}^{(1)}$ and start over.
%\item When we stop getting integrability conditions in (a), compute $d\upaK$, $|K|=q-1$. Set up the relevant matrices, and compute the reduced Cartan characters, $\st^{(q)}_i$ in a neighborhood in $\Rr_{q}$. If they are non constant we repeat step (a), this time with $\Rr_{q+1}:=\Rr_{q,1}$ instead of $\Rr_q$. 
%\item  Once we have locally constant reduced Cartan characters in step (b), for some system $\Rr_p$, we count the number of parametric derivatives of order $p+1$ in $\Rr_{p,1}$, $r^{(p+1)}$, and check whether $r^{(p+1)}=\sum i\st^{(p)}_i$. If successful, we know that there exist coordinates in which $\Rr_{p}$ is involutive in the open neighborhood from step (b). If this fails, we go back to (a) with $\Rr_p$ replaced by $\Rr_{p,1}$. 
%\end{enumerate}
%\end{algo}

\begin{algo}\label{ckalgo}
~
\begin{enumerate}[(a)]
\item First check if $\Rr^{(1)}_{1,1}=\Rr_1$ by computing all $d\upa$ on $\Rr_1$. If not, then replace $\Rr_1$ by $\Rr_{1,1}^{(1)}$ and start over.
\item When we stop getting integrability conditions in (a), set up the relevant matrices, and compute the reduced Cartan characters, $s^1_i$. If they are non constant we repeat step (a), this time with $\Rr_{1+1}:=\Rr_{1,1}$ instead of $\Rr_1$. 
\item  Once we have locally constant reduced Cartan characters in step (b), for some system $\Rr_p$, we count the number of parametric derivatives of order $p+1$ in $\Rr_{p,1}$, $r^{p+1}$, and check whether $r^{p+1}=\sum is^p_i$. If successful, we know that there exist coordinates in which $\Rr_{p}$ is involutive in the open neighborhood from step (b). If this fails, we go back to (a) with $\Rr_p$ replaced by $\Rr_{p,1}$. 
\end{enumerate}
\end{algo}

Every integrability condition found during Algorithm \ref{ckalgo} enlarges the symbol module of the equation and so, by the Hilbert basis theorem and regularity, we will eventually stop finding integrability conditions. By regularity we will eventually move past step (b) and Cartan's test in step (c) will succeed eventually by, for example, the existense theorem for $\delta$-regular coordinates, \cite[Theorem 4.3.15]{s10}. The result is, after changing variables into the $\delta$-regular ones, an involutive differential equation and the reduced Cartan characters contain information on the order of freeness of a general solution to $\Rr$, \cite{s10}. For example, if $\Rr_q$ is involutive and contains only equations of order exactly $q$, then $s^q_i$ is the number of free functions of $i$ variables that the general solution of $\Rr_q$ will depend on.

\end{section}

\begin{section}{Termination of Cartan's equivalence method}\label{CE}

Cartan's equivalence method is a powerful tool in differential geometry for finding local invariants of geometric objects. The method proceeds in a very similar fashion as our completion algorithm \ref{ckalgo} with some twists along the way (see \cite{Gardner89, olver95} for nice expositions of the method), and it has been of long-standing interest to rigorously prove that it terminates at involution although it has been suspected/assumed to follow, in one way or another, from the Cartan-Kuranishi theorem, i.e. termination of Algorithm \ref{ckalgo}. The method has an interesting history and it has puzzled many mathematicians, especially the original exposition of Cartan, \cite{Car53}. In the introduction to his book on the method, Robert Gardner \cite{Gardner89}, offers his explanation for these difficulties. 

\begin{displayquote}
\emph{The reason, I believe, was that the method left too much apparent freedom in the way part of the constructions were done. In particular, the process of Lie algebra compatible absorption of torsion and the process of reduction of structure group were not laid out in any systematic way... After thinking about this method for another twenty years... I realized that mixing Cartan's original method with the concept of principal components in Cartan's theory of R\'ep\`ere Mobile led to an algorithmic way to execute Cartan's method.}
\end{displayquote}

Indeed, in the author's thesis, \cite{IMF}, it was shown that Cartan's method and the modern formulation of Cartan's R\'ep\`ere Mobile, \cite{olver08moving}, are really different sides of the same coin. There turned out to be a tremendous \emph{computational} advantage to incorporating the moving frame into Cartan's method as it invites the application of the \emph{recurrence formula} to drastically cut down the necessary calculations of Cartan. 

In this section, though, by carefully comparing the different steps in Cartan's method to the steps of Algorithm \ref{ckalgo}, we shall show that both routines find the same integrability conditions and compute the same reduced Cartan characters and hence terminate at the same time. We shall first introduce the basic language of Lie pseudo-groups as developed in \cite{olver05} before introducing Cartan's method and proving its termination as a consequence of the termination of Algorithm \ref{ckalgo}.

\begin{subsection}{Lie pseudo-groups}
So far we have studied differential equations in an independent variable $x\in \R^n$ and a dependent variable $u\in \R^m$. In the following we shall consider local diffeomorphisms of $\R^n$ and so $x,u\in \R^n$. At the risk of some early confusion, it will be convenient to denote the dependent variables with the capitalized $X$, instead of $u$. 

Consider the jet bundle $\Jinf(\R^n\times\R^n)$ for sections of the trivial bundle $\R^n\times\R^n\overset{\sigma}{\to}\R^n$. Let $\D(\R^n)$ denote the collection of all local diffeomorphisms of $\R^n$ and let $\D_\infty(\R^n)\subset\Jinf(\R^n\times\R^n\to\R^n)$ be the subbundle of all infinite jets of these. We shall sometimes (when it is clear what the $\R^n$ is) drop the mention of $\R^n$ and simply write $\D$ and $\D_\infty$ instead of $\D(\R^n)$ and $\D_\infty(\R^n)$. Similarly, we denote by $\D_p(\R^n)$ the set of $p$-jets of transformations from $\D(\R^n)$. For local coordinates $x$ on $\R^n$ we have the induced jet coordinates $(x,X,\ldots, \XiK,\ldots)$ on $\D_\infty$ (and by truncation on $\D_p$). That is, for a local diffeomorphism $\p$, we have $\jpx=(x,X,\ldots, \XiK,\ldots)$, where $$\XiK=\frac{\partial^{|K|} \p}{\partial x^{K}}(x),\quad K\in\Nn.$$ 
 
The collection $\D$ forms a \emph{pseudo-group}, since if $\p\in\D$ then $\p^{-1}\in\D$ and the composition of two local diffeomorphisms is again a diffeomorphism \emph{whenever the composition can be defined}. As emphasized by Ehresmann, \cite{Ehresmann53}, each set $\D_p\subset J^p(\R^n\times\R^n)$ carries a \emph{groupoid structure}; we define the source and target of a $p$-jet $j^p\p\resx=(x,X,\ldots,X^i_K,\ldots)\in\D_p$, $|K|\leq p$, as
\[
\sigma(j^p\p\resx)=x\quad\text{and}\quad \tau(j^p\p\resx)=X,
\]
respectively. The groupoid multiplication of $j^p\p\resx$ and $j^p\psi\restrict{X}$, where $\tau(j^p\p\resx)=\sigma(j^p\psi\restrict{X})$, is defined as
\[
j^p(g\comp f)\resx,
\]
where $f$ and $g$ are functions in $\D$ having the $p$-jets $j^p f\resx=j^p\p\resx$ and $j^pg\restrict{X}=j^p\psi\restrict{X}$. This definition does not depend on the choice of $f$ and $g$ as can be seen from the chain rule and the resulting combinations of the jets of $\p$ and $\psi$ may be given in terms of Bell polynomials via the general F\`aa-di-Bruno formula, \cite{olver00can}. We write the groupoid operation as $j^p\psi\restrict{X}\cdot j^p\p\resx$. The source and target maps provide each $\D_p$ with a double fibration,

\[
\begin{tikzpicture}
  \node (A0) at (0,0) {$\D_p$};
  \node (A1) at (240:2cm) {$\R^n$};
  \node (A2) at (300:2cm) {$\R^n$.};
  \draw[->,font=\scriptsize]
  (A0) edge node[left] {$\sigma$} (A1)
  (A0) edge node[right] {$\tau$} (A2);

\end{tikzpicture}
\]
 
\begin{definition}\label{def:pseudo}
A \emph{Lie pseudo-group}, $\G$, of local transformations of $\R^n$ is a sub-pseudo-group of $\D$ that is determined by a set of formally integrable differential equations called the \emph{defining equations} that are regular in the sense of Definition \ref{regular} (and can hence be completed to their involutive form). 
\end{definition}

\bex
The pseudo-group of local transformations of $\R^2$ of the form
\[
(x,u)\mapsto (X,U)=\left(f(x), \frac{u}{f'(x)}\right),
\]
where $f:\R\to\R$ is invertible, are a Lie pseudo-group with determining equations
\[
X_u=0,\quad X_xU_u=1.
\]
These are easily found to be regular.
\eex

We denote the collection of transformations making up the pseudo-group by $\G$ while subscripts will indicate the set of groupoid elements, e.g. $\G_\infty$ is the set of infinite jets of transformations from $\G$. Each $\G_p$, $0\leq p \leq \infty$, is a sub-groupoid of $\D_p$. Let $\G$ be a Lie pseudo-group determined by the formally integrable equations 
\begin{equation}\label{def}
F(x,X^{(q)})=0,
\eeq
where $(x,X^{(q)})=(x,X,\ldots, \XiK,\ldots)$, $|K|\leq q$, denotes all jets up to order $q$. Let $\Pe$ be a one parameter family of diffeomorphisms from $\G$. The flow $\ep\mapsto \Pe(x)$ through points $x\in\R^n$ generates a vector field
\beq\label{vid}
\vb(x)=\zeta^i(x)\frac{\partial}{\partial x^i}
\eeq
in the \emph{Lie algebroid} $\A$ of $\G$ of local vector fields on $\R^n$. The components of $\vb$ satisfy the linearization of (\ref{def}) at the identity section $\mathbbm{1}$:
\beq\label{lin}
L(x,\zeta^{(q)})=\frac{\partial F(x,X^{(q)})}{\partial \XiK}\restrictbig{\mathbbm{1}}\zeta^i_K=0.
\eeq
It is a simple matter, under the present regularity hypothesis, to show that for transitive Lie pseudo-groups, (\ref{lin}) are locally solvable whenever (\ref{def}) are. 

A local transformation $\psi\in\G$ acts on the set of jets $\jpx\in\G_\infty$ with $x\in\text{dom}~\psi$ via the right groupoid product by
\beq\label{Rcdot}
R_{\psi}\cdot \jpx=j^\infty\p\resx\cdot j^\infty\psi^{-1}\restrict{\psi(x)}.
\eeq
Notice that this action leaves the target coordinate $X$ invariant, and that it projects to the canonical action 
\[
x\mapsto \psi(x),
\]
on the source coordinates since $\sigma(R_{\psi}\cdot \jpx)=\psi(x)$.

In \cite{olver05} a basis, $\{\muiK\}_{K\in\Nn, 1\leq i\leq n}$, for the contact co-distribution on $\D_\infty$, that is \emph{invariant} (under the action (\ref{Rcdot}) of $\D$) was constructed. Naturally, these contact forms are called the Maurer-Cartan forms of the pseudo-group $\D$. The form $\muiK$ agrees with the standard contact form $\upiK=dX^i_K-X^i_{K,j}dx^j$ on the identity section $\mathbbm{1}$ of $\D_\infty\to\R^n$, and each $\muiK$ is a linear combination of $\upjJ$ for $|J|\leq |K|$ (and conversely). When we restrict the Maurer-Cartan forms to a sub-groupoid $\G_\infty\subset\D_\infty$, we obtain certain linear dependencies among the $\muiK$. The important discovery, made in \cite{olver05}, is that these are given by
\beq\label{linmc}
\frac{\partial F(x,X^{(q)})}{\partial \XiK}\restrictbig{\widetilde{\mathbbm{1}}}\muiK=0,
\eeq
where restriction to $\widetilde{\mathbbm{1}}$ means first restricting to the identity section and then replacing all source coordinates $x$ by target coordinates $X$.

\bre\label{strrem}
The structure equations, or the formulas for $d\muiK$ on $\D_\infty$, were worked out in \cite{olver05}. They are 
\beq\label{structureeq}
d\muiK=\sum_{1\leq j\leq n}\o^j\wedge\muiKj+\sum_{\substack{L+M=K \\ |M|\geq1}}\binom{K}{L}\sum_{1\leq j\leq n}\mu^i_{L,j}\wedge\mu^j_M.
\eeq
Where $L+M$ is the componentwise addition of multi-indices in $\Nz^n$ and
\[
\binom{K}{L}=\frac{K!}{L!M!}.
\] 
These can then be pulled back to $\G_\infty\subset\D_\infty$ and, using (\ref{linmc}), gives the structure equations of the Maurer-Cartan forms on $\G_\infty$.
\end{Rem}

\end{subsection}

\begin{subsection}{Cartan's equivalence method}

Cartan's equivalence method concerns Lie pseudo-groups given by systems of PDE, for local diffeomorphisms $\p$ of $\R^n$, having the special form

\beq\label{eq:cartanpde}
\p^*\ebo=g\ebo.
\eeq

In the above equation, $\ebo$ is a fixed column vector of $n$ \emph{coframe elements} $\eta^1, \ldots, \eta^n$, $g$ is an element of some subgroup $G$ of the general linear group GL$(n,\R)$, and $g\ebo$ is the obvious ``matrix product''. In all known (at least to me) applications, $G$ is a closed algebraic subgroup. In any case, we require $G$ to be a sub Lie group of GL$(n,\R)$. Notice that (\ref{eq:cartanpde}) indeed defines a pseudo-group since if $\p^*\ebo=g\ebo$ and $\psi^*\ebo=h\ebo$, we have
\[
(\p^{-1})^*\ebo=g^{-1}\ebo,\quad\text{and}\quad (\p\comp\psi)^*\ebo=gh\ebo,
\]
whenever $\p$ and $\psi$ can be composed.

When (\ref{eq:cartanpde}) is written out in coordinates it determines a first order PDE for $\p$ but in applications the presentation (\ref{eq:cartanpde}) arises as that capturing the invariance of some geometric structure and is most natural in dealing with such problems. Indeed, the geometry of these differential equations, or rather, of the structure groups $G$, can allow for their completion to involution in problems that are hopeless to attack with direct methods (such as equivalence problems in high dimensions). A coframe $\ebo$ along with a choice of subgroup $G$ is called a $G$-structure on $\R^n$. Denote by $\G$ the pseudo-group of local diffeomorphisms satisfying (\ref{eq:cartanpde}).

\bex\label{ex:inv}
As an example of the reduction in complexity equations of the form (\ref{eq:cartanpde}) provide, compared to the standard jet-coordinate formulation, consider the \emph{divergence equivalence problem} for first order Lagrangians on the line (cf. \cite{olver95} for more on this problem). Let $\intc L(x,u,p)dx$ be a first order Lagrangian in one independent and one dependent variable, where we denote $u_x=p$. A \emph{contact transformation}, $\p$, on the space, $\R^3$, of $(x,u,p)$, preserves this Lagrangian \emph{up to total divergence} if and only if $\p^*\ebo=g\ebo$, where $\ebo$ is the coframe
\[
\eta^1=dx,\quad \eta^2=du-pdx,\quad \eta^3=-\tilde{E}dx+L_{pp}dp,
\]
where $\tilde{E}=L_u-L_{px}-pL_{pu}$ are the \emph{truncated} Euler-Lagrange equations, and $g\in G$, where 
\[
G=\left\{\bbm a_1 & a_2 & a_3 \\ 0 & a_4 & 0 \\ 0 & a_5 & \dfrac{1}{a_4}\ebm~ \middle|~a_1a_4\neq 0 \right\}.
\]

Note that $\ebo$ is a coframe if $L_{pp}\neq 0$, which we must assume. The succinct condition $\p^*\ebo=g\ebo$, in jet coordinates $j^1\p\resx=(x,u,p, X,U,P,X_x,\ldots,P_p)$, is quite a bit more involved and harder to work with. Writing $z=(x,u,p)$ and $Z=(X,U,P)$, the condition $\p^*\ebo=g\ebo$ is equivalent to the system
\begin{align*}
0&=-p \left(P X_u+U_u\right)+P X_x+U_x\\
0&=P X_{p}+U_{p}\\
L_{pp}(Z)&=L_{pp}(z) \left(P_{p}+\tilde{E}(Z)X_{p}\right)(P X_u+U_u)\\
0&=-\tilde{E}(z) P_{p}-p \left(P_u+\tilde{E}(Z) X_u\right)+P_x-\tilde{E}(z)\tilde{E}(Z)
   X_{p}+\tilde{E}(Z) X_x.
\end{align*}
In general, the group parameters of $G$ will be complicated expressions of the 1-jets $j^1\p\resx$, but do serve to parametrize, along with the source and target coordinates $(x,X)$, the first order jet space, $\G_1$, of the pseudo-group of all equivalence maps $\p$.
\eex

Consider a $G$ structure with base coframe
\beq\label{Adx}
\ebo=A(x) d\xbf,
\eeq
where $d\xbf$ is the column vector of the coordinate coframe on $\R^n$, $dx^1, \ldots, dx^n$, and $A:\R^n\to G$ is a smooth map. Assume $G$ is $r$ dimensional and parametrized by the real numbers $a_1,\ldots, a_r$. Cartan's equivalence method completes the system $\p^*\ebo=g\ebo$ to involution in a similar fashion as Algorithm \ref{ckalgo}, but has some (rather brilliant) twists along the way. We shall now closely compare these two routines and demonstrate that they are equivalent in the cases where Cartan's method works. Consider Algorithm \ref{ckalgo} and notice that instead of using the standard basis of contact forms $\Upsilon^i_K=d\XiK-\XiKj dx^j$ on $\Jinf(\R^n\times\R^n)$ we could have chosen any basis $\{\Theta_t\}_{t\in \T}$, for an index set $\T$, with filtration 
\[
\T_0\subset\T_1\subset\T_2\subset\dots\subset\T,
\] 
such that, for each $q\in \Nz$, $\{\Theta_t\}_{t\in\T_q}$ is a basis for the contact codistribution on $J^{q+1}(\E)$. This is simply because, for each $q\geq 0$, the set $\{\upiK\}_{|K|\leq q}$ is an invertible linear combination of contact forms in $\{\Theta\}_{t\in\T_{|K|}}$, and therefore, we shall find the same numbers $r^{q+1}$ of parametric derivatives and reduced Cartan characters $s^q_i$ as if we had used the standard basis. The Maurer-Cartan forms on $\D$ is one such collection, and Cartan's equivalence method explicitly constructs, order by order, another such invariant collection.

Cartan's equivalence method starts out, not by computing the exterior derivatives of the standard zero order contact forms $\upi$, but rather the linear combination of these with respect to the $A$ from (\ref{Adx}). Before going ahead with those calculations we establish some basic properties of the space $\G_1$. Notice that for solutions, $\p$, of $\p^*\ebo=g\ebo$, we have
\begin{alignat}{2}
&{} & \p^*\ebo&=g\ebo\\
&\iff & \p^*(A(x)d\xbf)&=g A(x)d\xbf\\
&\iff\quad & A(X)d\mathbf{X}&=g A(x)d\xbf\\
&\iff\quad & A(X)\nabla Xd\xbf&=g A(x)d\xbf,
\end{alignat}
where $\nabla X$ is the Jacobian of $\p$. Solving for $g$, we find that on $\G_1$,
\beq\label{eq:g}
g=A(X)\nabla X A(x)^{-1},
\eeq
and as mentioned previously, the group parameters of $G$ serve to parametrize $\G_1$. Let us write the structure equations of $\ebo$ as
\[
d\eta^i=\sum_{j<k}B^i_{jk}(x)\eta^j\wedge\eta^k,
\]
or, writing $B_{jk}(x)$ for the column vector with $B^i_{jk}(x)$ in the $i\th$ entry,
\[
d\ebo=\sum_{j<k}B_{jk}(x)\eta^j\wedge\eta^k.
\]
On solutions of $\p^*\ebo=g \ebo$, by taking $d$ on both sides and using $\p^*d=d\p^*$, we have
\beq\label{eq:dA}
\begin{aligned}
\p^*d\ebo&=\sum_{j<k}B_{jk}(X)\p^*\eta^j\wedge\p^*\eta^k\\
\iff~~~d(A(X)d\mathbf{X})&=\sum_{j<k}B_{jk}(X)(g\ebo)^j\wedge(g\ebo)^k.
\end{aligned}
\eeq

Now, having established the main properties of $\G_1$, let us compute
\[
d\left(A(X)\pmb{\Upsilon}\right),
\]
where $\pmb{\Upsilon}$ is the column vector of $\Upsilon^1,\ldots, \Upsilon^n$ and $\up^i=X^i_jdx^j$, or, written succinctly, $\pmb{\up}=d\mathbf{X}-\nabla Xd\mathbf{x}$. We find that, on $\G_1$, since $A(X)\nabla X=gA(x)$,
\beq\label{sys:dA}
\begin{aligned}
d\left(A(X)\pmb{\Upsilon}\right)&=d\left(A(X)d\mathbf{X}-A(X)\nabla X d\xbf\right)\\
&=d\left(A(X)d\mathbf{X}\right)-d(gA(x)d\xbf)\\
&=d\left(A(X)d\mathbf{X}\right)-d(g\ebo).
\end{aligned}
\eeq
Now, on solutions, according to (\ref{eq:dA}), this is equal to 
\beq\label{sys:dAup}
\sum_{j<k}B_{jk}(X)(g\ebo)^j\wedge(g\ebo)^k-d(g\ebo).
\eeq

A basis for right invariant forms for the Lie group $G$ can be found in the entries of $dg\cdot g^{-1}$, i.e. we choose $r$ linearly independent entries of $dg\cdot g^{-1}$ as a basis for the right invariant forms on $G$. Let us denote these forms by $\alpha^1,\ldots, \alpha^r$, and so each component of $dg\cdot g^{-1}$, being an invariant 1-form on $G$, is a \emph{constant coefficient} linear combination of the $\alpha^\kappa$, $1\leq\kappa\leq r$. Let us write $dg\cdot g^{-1}=E(\alpha^1,\ldots, \alpha^r)$, where the $(i,j)$ entry in $E$ is $\sum_\kappa F^{ij}_\kappa\ak$. Notice that we can write
\[
d(g\ebo)=dg\wedge\ebo+gd\ebo=dg\cdot g^{-1}\wedge g\ebo + C(x,g,\ebo),
\]
where $C(x,g,\ebo)$ is a vector of two forms with $i\th$ component
\[
\sum_{j<k}C^i_{jk}(x,g)(g\ebo)^j\wedge(g\ebo)^k.
\]
Collecting all of this, (\ref{sys:dAup}) gives that $d(A(X)\mathbf{\Upsilon})$ is equal to
\beq\label{eq:horg}
\sum_{j<k}B_{jk}(X)(g\ebo)^j\wedge(g\ebo)^k- (E\wedge g\ebo)-\sum_{j<k}C_{jk}(x,g)(g\ebo)^j\wedge(g\ebo)^k.
\eeq

The vector space of differential 1-forms on $\Jinf(\R^n\times\R^n)$ is a \emph{direct sum} of horizontal forms, whose space is generated by $dx^1,\ldots, dx^n$, and the contact forms. Since the $a_1,\ldots, a_r$ parametrize the space of 1-jets of $\G_1$, when we write each $\alpha^\kappa$ as a sum of a horizontal and a contact form (where we choose $g\ebo$ as a basis for horizontal forms)
\beq\label{eq:alphac}
\alpha^\kappa=\alpha^\kappa_h+\alpha^\kappa_c=\sum_jz^{\kappa}_{j}(g\ebo)^j+(\alpha^{\kappa}-\sum_jz^{\kappa}_{j}(g\ebo)^j),
\eeq
the contact forms $\alpha^{\kappa}_c:=\alpha^{\kappa}-\sum_jz^{\kappa}_{j}(g\ebo)^j$ are a basis for the contact forms on $\G_1$ and may therefore be used for the test of involution (at the second order) in Algorithm \ref{ckalgo}. Therefore, step (a) of Algorithm \ref{ckalgo} calls for us to replace each $\alpha^k$ in (\ref{eq:horg}) by $\sum_jz^{\kappa}_{j}(g\ebo)^j$, collecting all purely horizontal coefficients, equating the whole thing to zero and solve for as many of the second order parameters, $z^\kappa_j$ as possible. The number of the $z^\kappa_j$ that we do not manage to solve for is, just like before, the fiber-dimension of $\G_{1,1}\to\G_1$, or $r^2$.

Notice that the parameters $z^\kappa_j$ are, just like the $a^\kappa$'s were, very complicated expressions in the 2-jets of $\p\in\G$. But just as before, we do not mind this, as we are sure that they, along with $(x,X,g)$, parametrize the prolonged space $\G_{1,1}$. 

When (\ref{eq:horg}) is written out, in the coordinates $(x,X,g,z)$, and equated to zero we obtain equations of the form
\beq\label{eq:secg}
B^i_{jk}(X)=\sum_\kappa\left(F^{ik}_\kappa z^\kappa_j-F^{ij}_\kappa z^\kappa_k\right)+C^i_{jk}(x,g),
\eeq
where $\sum_\kappa F^{ij}_\kappa\alpha_\kappa$ is the $(i,j)$ entry in $E$. The equations (\ref{eq:secg}) are the relevant second order equations in the first order prolongation of $\G_1$, $\G_{1,1}$. The reader familiar with Cartan's equivalence method will have noticed that the above procedure is \emph{not} exactly that of Cartan's. Namely, in accordance with Algorithm \ref{ckalgo} we obtained the prolonged equations (\ref{eq:secg}) but Cartan's equivalence method sets the left hand sides of (\ref{eq:secg}) to any convenient constant(!), usually zero, and solves the system
\beq\label{eq:carg}
\text{constant}=\sum_\kappa\left(F^{ik}_\kappa z^\kappa_j-F^{ij}_\kappa z^\kappa_k\right)+C^i_{jk}(x,g).
\eeq
Let us denote the corresponding space obtained in this ``normalized'' way $\Gt_{1,1}$. So what is the difference between the two methods, and why should Cartan's route make sense, i.e. why should we be allowed to change parts of the prolonged equations for $\G$ without sabotaging our quest for an involutive form of these equations? This is a historically tricky issue (alluded to in the quotation at the beginning of this section), but the groupoid language of Lie pseudo-groups presented in the last subsection will help explain away this troublesome issue.

\end{subsection}

\begin{subsection}{Invariants}

Recall that an element, $\p$, of a Lie pseudo-group $\G$ of local diffeomorphisms of $\R^n$ can act on the groupoids $\G_p$, $0\leq p\leq\infty$, via the left and right groupoid multiplications (whenever the following groupoid products are defined),
\begin{align*}
L_\p\cdot j^p\psi\resx &=j^p\p\restrict{X}\cdot j^p\psi\resx\\
R_\p\cdot j^p\psi\resx &= j^p\psi\resx\cdot j^p\p^{-1}\restrict{\p(x)}.
\end{align*}
In the coordinates $(x,X,g)$ on $\G_1$, these operations, for $\p^*\ebo=h\ebo$, become
\beq\label{eq:LR}
\begin{aligned}
L_\p\cdot(x,X,g)&=(x,\p(X),hg)\\
R_\p\cdot(x,X,g)&=(\p(x), X, gh^{-1}).
\end{aligned}
\eeq
The defining equations for a Lie pseudo-group are obviously invariant under both actions. 

\begin{center}{\textbf{Reduction of structure group}}\end{center}

Consider the two equations (\ref{eq:secg}) and (\ref{eq:carg}). Since they are affine in the second order coordinates $z^\kappa_i$ we will be able to solve for equally many $z$'s in both equations and therefore we find the same number of second order parametric derivatives, $r^2$. Say, after solving the equations (\ref{eq:secg}) for \emph{all possible} $z$'s, some of the remaining equations are non-trivial (i.e. 0=0) and first order. By the structure of (\ref{eq:secg}) we can see that those equations will have the form
\beq\label{eq:HJ}
H(x,g)=J(X),
\eeq
where $H$ is a vector of constant coefficient linear combinations of the $C^i_{jk}$ and $J$ is a vector of constant coefficient linear combinations of the $B^i_{jk}$. Notice that since the identity map solves this equation, we have
\beq\label{eq:HI}
H(x,I)=J(x).
\eeq
If $H$ explicitly depends on group parameters $a_\kappa$, and is full rank in these parameters, we must solve for as many as we can and begin step (a) of Algorithm \ref{ckalgo} afresh. On the other hand, if we solve the corresponding ``normalized'' equations in (\ref{eq:carg}) we wind up with the expressions
\beq\label{eq:H}
H(x,g),
\eeq
for the \emph{same} functions $H$. Note that, from (\ref{eq:H}), we can recover (\ref{eq:HJ}) via (\ref{eq:HI}). The actual defining eqauations for the pseudo-group (\ref{eq:HJ}) are satisfied by all $\p$ such that $\p^*\ebo=g\ebo$, $g\in G$ and since the defining equations of any Lie pseudo-group are invariant under the operations (\ref{eq:LR}) we have, by acting on (\ref{eq:HJ}) by $j^1\p\restrict{X}=(X,\p(X), h)$ via the left groupoid operation, that
\[
H(x,hg)=J(\p(X)).
\]
In particular, if $H(x,g)=H(\xb, \bar{g})=J(X)$, we have
\beq\label{eq:welldef}
H(x,hg)=H(\xb, h\bar{g})=J(\p(X)).
\eeq
This means that, due to the invariant nature of the expressions $H(x,g)$, there is a well defined action by the \emph{Lie group} $G$ on the range of $H$, $\N$, given by
\[
h\cdot H(x,g)=H(x,hg).
\] 
This observation (due to \'Elie Cartan) opens the door to a procedure called \emph{reduction of the structure group}, that we may apply instead of solving (\ref{eq:HJ}). However, and unfortunately, this will only work if the action of $G$ on $\N$ is \emph{transitive}, i.e. for any two points $b,c\in\N$, there is an $h\in G$ such that $h\cdot b=c$. To describe this procedure, choose an arbitary point $b\in\N$, and a smooth map $g:\R^n\to G$ such that 
\beq\label{eq:Ha}
H(x,g(x))=b,\quad\text{for all}~x\in\R^n.
\eeq
Let $\p\in \G$ so $\p^*\ebo=h\ebo$, for some $h\in G$, and notice that applying $R_\p$ to both sides in (\ref{eq:HJ}) results in (the target coordinate $X$ is $R$-invariant) 
\[
H(\p(x), gh^{-1})=J(X).
\]
This means that $H(x,g)$ is \emph{$R$-invariant} and applying $R_\p$ to (\ref{eq:Ha}) gives
\[
H(\p(x), g(x)h^{-1})=b.
\] 
But $g(x)$ is defined for all $x$ and we have
\[
H(\p(x), g(x)h^{-1})=b=H(\p(x), g(\p(x))).
\]
In terms of the action of $G$ on $\N$ this means that 
\beq\label{eq:gph}
g(\p(x))hg(x)^{-1}
\eeq
is in the \emph{isotropy} group of $b\in\N$ under the action of $G$. Denote this group by $G_b$ and notice that 
\beq\label{eq:Ga}
\p^*(g(x)\ebo)=g(\p(x))hg(x)^{-1}(g(x)\ebo)=\bar{h}(g(x)\ebo),\quad \bar{h}\in G_a.
\eeq
Therefore, by changing base coframes from $\ebo$ to $g(x)\ebo$ we can incorporate the first order equations (\ref{eq:HJ}) as a new equivalence problem with a reduced structure group $G_a\subset G$. But is this new equivalence problem equivalent to $\G^{(1)}_{1,1}$, i.e. do solutions to (\ref{eq:Ga}) automatically solve (\ref{eq:HJ})? To show that this is indeed the case, let $\p$ be such that $g(\p(x))hg(x)^{-1}\in G_b$, where $\p^*\ebo=h\ebo$ and let $H(x,\gb)=b$. Note that this means that $\p$ is a solution to (\ref{eq:Ga}). We shall show that such a $\p$ preserves all level sets of $H$, under $R_\p$, and then it will easily follow that $\p$ solves (\ref{eq:HJ}). So we let $b=H(x,\gb)$ and aim to show that $H(\p(x), \gb h^{-1})=b$. Since $b=H(x,\gb)=H(x,\gb g(x)^{-1}g(x))$, we have that 
\beq\label{eq:gb}
\gb g(x)^{-1}\in G_b.
\eeq
Also, acting on $H(x,\gb)$ by $R_\p$ gives
\begin{align*}
H(\p(x), \gb h^{-1})&=H(\p(x), \gb g(x)^{-1} g(x) h^{-1})\\
&= H(\p(x), \left(\gb g(x)^{-1}\right)\cdot\left( g(x) h^{-1} g(\p(x))^{-1}\right)\cdot g(\p(x))),
\end{align*}
but, both $\gb g(x)^{-1}$ and $g(x) h^{-1} g(\p(x))^{-1}$ are elements of $G_b$ and, by definition, $H(\p(x), g(\p(x)))=b$, and so
\[
H(\p(x), \gb h^{-1})=H(\p(x), g(\p(x)))=b.
\]
So a solution to (\ref{eq:Ga}) leaves the level set $H=b$ invariant. If $H(x,\gb)=c\in\N$, we first choose an element $u\in G$ such that $u\cdot c=b$ (this is where we need transitivity of $G$ on $\N$), so that $H(x,u\gb)=b$. Invariance of the level set $H=b$ implies
\[
b=H(\p(x), u\gb h^{-1})~~\Rightarrow~~c=u^{-1}b=H(\p(x), \gb h^{-1}).
\]
This proves that any solution to (\ref{eq:Ga}), preserves the level sets of $H$ under the right groupoid multiplication (\ref{eq:LR}) and hence that $H$ is invariant under this action. This gives, for solutions to (\ref{eq:Ga}), that
\[
H(x,I)=H(\p(x), h^{-1}),
\]
where $\p^*\ebo=h\ebo$. For $\p^{-1}$ this becomes
\beq\label{eq:HpI}
H(\p(x), I)=H(x,h),
\eeq
which, in view of (\ref{eq:HI}) is the same as (\ref{eq:HJ}).

Ok, this was quite a bit of work, but we have established that adjoining the integrability conditions (\ref{eq:HJ}) to our original system $\p^*\ebo=g\ebo$ gives a system equivalent to the equivalence problem $\p^*\tilde{\ebo}=g\tilde{\ebo}$, where $\tilde{\ebo}$ is a new coframe of the base $\R^n$ and $g$ is an element of the reduced structure group $G_b$. Very well, we now repeat this process, for the new system until no new first order equations, or, equivalently, no invariants $H(x,g)$ are found. But recall that all this is contingent upon the invariants $H(x,g)$ being full rank in the group parameters \emph{and} the action of the structure group $G$ on the range of $H$ being transitive. If the invariants $H(x,g)$ are not full rank then normalizing a sub-collection of these will provide an invariant \emph{not} dependent on a group parameter, $F(x)$. Such a \emph{genuine invariant} of the problem prevents the above procedure of reduction of structure group as we are unable to deduce (\ref{eq:welldef}), since, in this case, this only holds for $x$ and $\xb$ such that $F(x)=F(\xb)$ and the action of the structure group on the range of $H$ is no longer well defined. Systems $\p^*\ebo=g\ebo$ that satisfy these regularity conditions at each step of Cartan's equivalence method are called \emph{constant type} problems, \cite{Gardner89}. 

\bex\label{ex:inv2}
Continuing Example \ref{ex:inv}, but skipping the details of the calculations, after computing the right hand sides in (\ref{eq:carg}) and setting some of them to zero, one of the expressions reduces to
\[
H(x,g)=-\frac{a_4^2}{a_1L_{pp}}.
\]
Other non-normalized expressions become trivial. This invariant is obviously regular in that it is full rank in the group parameters and the action of $G$ on its range is transitive. Setting it equal to $-1$ gives
\[
a_1=\frac{a_4^2}{L_{pp}}
\]
and we choose $a_4=1$, $a_1=\frac{1}{L_{pp}}$ and $a_2=a_3=a_5=0$. Now, for two elements $g,h\in G$, parametrized by $a$'s and $b$'s, respectively, we have
\[
H(x,hg)=-\frac{(b_4a_4)^2}{(b_1a_1)L_{pp}}
\]
and if $g$ satisfies $\dis H(x,g)=-\frac{a_4^2}{a_1L_{pp}}=-1$, $H(x,hg)=-1$ is equivalent to
\[
-\frac{b_4^2}{b_1}=-1\quad\iff\quad b_1=b_4^2.
\]
The group elements of $G$ satisfying the above equation form the isotropy subgroup of $-1$. This reduction of structure group has resulted in a new system $\p^*(g(x)\ebo)=h\left(g(x)\ebo\right)$, where the new coframe is
\[
g(x)\ebo=\bbm \frac{1}{L_{pp}} & 0 & 0 \\ 0 & 1 & 0 \\ 0 & 0 & 1\ebm\cdot\bbm 
dx \\  du-pdx \\ -\tilde{E}dx+L_{pp}dp\ebm
=
\bbm \frac{1}{L_{pp}}dx \\  du-pdx \\ -\tilde{E}dx+L_{pp}dp\ebm,
\]
and $h\in G_{-1}$, where
\[
G_{-1}=\left\{\bbm b_4^2 & b_2 & b_3 \\ 0 & b_4 & 0 \\ 0 & b_5 & \dfrac{1}{b_4}\ebm~ \middle|~b_4\neq 0 \right\}.
\]

\eex

\begin{center}{\textbf{Involution}}\end{center}

If no invariants, genuine or first order, are found during the first step of Cartan's method we can test for involution of the system. For this step we use the basis of contact forms $A(X)\mathbf{\Upsilon}$, instead of the standard basis $\mathbf{\Upsilon}$, as before. The reduced Cartan characters are computed by maximizing ranks of collections of one forms as in (\ref{eq:sqk}) but since we maximize these ranks over all possible linear combinations of horizontal forms, we can just as well work with the horizontal basis consisting of 
\[
\frac{\partial}{\partial (g\ebo)^i},
\]
dual to $g\ebo$, instead of the standard frame $\dfrac{\partial}{\partial x^j}$. At each point in $\G_1$ the frame element $\dfrac{\partial}{\partial (g\ebo)^i}$ is an invertible linear combination of the standard frame elements $\dfrac{\partial}{\partial x^j}$ and the corresponding \emph{total derivative operators} are the same linear combinations of the $D_{x^j}$. Denote these total derivative operators by $\dis D_{(g\ebo)^i}$. The first reduced Cartan character is the maximal rank of the system
\beq\label{eq:1st}
\gamma_1\left(\sum_l a_1^l D_{(g\ebo)^l}\interior d(A(X)\mathbf{\Upsilon}) \right),
\eeq
where $\gamma_1$ denotes projection onto the space of first order contact forms. Notice that during the first step of Cartan's algorithm we computed, in (\ref{eq:horg}), the structure equations $d(A(X)\mathbf{\Upsilon})$ so all the hard work has already been done. Consider (\ref{eq:horg}), and denote, as before, the entries of the matrix $E$ by $\sum_\kappa F^{ij}_\kappa\alpha^\kappa$. Since, in (\ref{eq:1st}) we project onto the space generated by $\alpha^\kappa$, we find that
\begin{align*}
&{}~&&\gamma_1\left( D_{(g\ebo)^l}\interior d(A(X)\mathbf{\Upsilon})^i \right)\\
&=&&\gamma_1\left(D_{(g\ebo)^l}\interior \left(\sum_{j<k}B^i_{jk}(X)(g\ebo)^j\wedge(g\ebo)^k- (E\wedge g\ebo)^i-\sum_{j<k}C^i_{jk}(x,g)(g\ebo)^j\wedge(g\ebo)^k \right)\right)\\
&=&&\gamma_1\left(D_{(g\ebo)^l}\interior \left(- (E\wedge g\ebo)^i \right)\right)\\
&=&&\gamma_1\left(D_{(g\ebo)^l}\interior \left(- \sum_j\sum_\kappa F^{ij}_\kappa\alpha^\kappa\wedge(g\ebo)^j \right)\right)\\
&=&&\sum_\kappa F^{il}_\kappa\alpha^\kappa.
\end{align*}
And so the rank of (\ref{eq:1st}) is equal to the rank of the collection
\beq\label{eq:Fcon}
\left\{\sum_l\sum_\kappa a^l_1F^{il}_\kappa\alpha^\kappa\right\}_{1\leq i\leq n}.
\eeq
This is the same system derived by Cartan. We have then shown that Cartan's equivalence method finds the same number of parametric derivatives of second order, $r^{2}$, as step (a) of Algorithm \ref{ckalgo}, and the same reduced Cartan characters as step (b) of Algorithm \ref{ckalgo}, as applied to the differential equation $\p^*\ebo=g\ebo$.  

\begin{Rem}
Notice that since the coefficients $F^{ij}_\kappa$ are all \emph{constant}, the reduced Cartan characters will also be constant and the regularity condition of Definition \ref{regular} will automatically be satisfied.  
\end{Rem} 

\bex
Continuing Example (\ref{ex:inv2}), and, once again, skipping the explicit calculations, during the second loop through the equivalence method, now with the reduced structure group $G_{-1}$, we find no first order invariants. Counting the number of non-normalized second order parameters we find that $r^2=5$. Looking to set up the system (\ref{eq:Fcon}) we, first of all have,
\[
dg\cdot g^{-1}=\bbm 2\alpha^4 & \alpha^2 & \alpha^3 \\ 0 & \alpha^4 & 0 \\ 0 & \alpha^5 & -\alpha^4 \ebm,
\]
where, for example $\alpha^4=\dfrac{da_4}{a_4}$ but the formulas for the other forms are a little more complicated and not important. For a vector $v=(v_1,v_2,v_3)$ the system (\ref{eq:Fcon}) is
\[
2v_1\alpha^4 +v_2\alpha^2+v_3\alpha^3,\quad v_2\alpha^4,\quad v_2\alpha^5-v_3\alpha^4.
\]
Maximizing this rank over all vectors $v$ is an easy task, and we find the maximal rank $s^1_1=3$ for the choice $v=(0,1,0)$. For the second reduced Cartan character, the choice $v=(0,0,1)$ gives $s^1_2=1$ and since there are only $4$ one-forms we have $s^1_3=0$. Cartan's test is satisfied,
\[
5=r^2=1s^1_1+ 2s^1_2+3s^1_3=1\cdot 3+2\cdot 1 +3\cdot 0=5,
\] 
and the system is involutive at order one. This means that the symmetry group of a non-degenerate (and real-analytic) Lagrangian (i.e. $L_{pp}\neq0$) is infinite dimensional and the general pseudo-group element depends on three arbitrary functions of one variable and one arbitrary function of two variables. Moreover, using Cartan's technique of the graph (see \cite[p. 460-471]{olver95}) this implies that \emph{any} two non-degenerate Lagrangians are equivalent and the general equivalence map will depend on three arbitrary functions of one variable and one arbitrary function of two variables.  
\eex

\begin{center}{\textbf{Prolongation}}\end{center}

If the involutivity test fails, Algorithm \ref{ckalgo} repeats step (a) using the prolonged equation $\G_{1,1}$, instead of $\p^*\ebo=g\ebo$, and computes the exterior derivatives of all first order contact forms
\[
d\Upsilon^i_j\restrict{\G_{1,1}}, 
\]
and sets the purely horizontal parts to zero. (Recall that Cartan's method solves the ``normalized'' equations (\ref{eq:carg}) whereas Algorithm \ref{ckalgo} solves (\ref{eq:secg}).) If the involutivity test fails, Cartan's method rewrites the problem as that of equivalence of a \emph{new} $G$-structure. We now describe this process and show that it is equivalent to what Algorithm \ref{ckalgo} does. First we must collect the appropriate properties of the second order jet space parametrized by $(x,X,g,z)$.

%\begin{Rem}\label{rem:z}
%One useful fact, worth bearing in mind, is that since the system (\ref{eq:carg}), after solving for as many $z$'s as possible, does not produce any lower order equations, we can choose the constant left hand sides in (\ref{eq:carg}) to be zero.
%\end{Rem}

Consider the action of $\p\in\G$ on the space $\G_1$, parametrized by $(x,X,g)$ given by the \emph{right} groupoid operation,
\beq\label{eq:gh-1}
R_\p\cdot(x,X,g)=(\p(x), X, gh^{-1}),
\eeq
where $\p^*\ebo=h\ebo$. 

\begin{Rem}\label{rem:noX}
Notice that $R_\p$ does not act on the target coordinate $X$ and so we could view $R_\p$ as acting on the space of $(x,g)$. This is what is happening in Cartan's treatment.
\end{Rem}

The 1-forms $\alpha^\kappa$ are entries in the matrix of forms $dg\cdot g^{-1}$ and notice that, by (\ref{eq:gh-1}), we have (recall (\ref{eq:alphac})),
\beq\label{eq:Rpdg}
R_\p^*\left(dg\cdot g^{-1}\right)=d(gh^{-1})\cdot (hg^{-1})=dg\cdot g^{-1}+\text{horizontal forms},
\eeq
and in particular
\beq\label{eq:Rpa}
R_\p^*\alpha^\kappa=\alpha^\kappa+\text{horizontal forms}.
\eeq
The horizontal component of the above equations stems from the exterior derivative hitting $h^{-1}$ in (\ref{eq:Rpdg}), whose entries are (complicated) expressions depending on the 1-jets, $j^1\p$, of $\p$. Since $R_\p$ is a \emph{contact transformation}, i.e. it preserves the space of contact forms, (\ref{eq:Rpa}) implies
\[
R_\p^*\alpha^\kappa_c=\alpha^\kappa_c.
\]
To better emphasize which space we are acting on, we shall in what follows write $\Rtwo$ for the action of $R_\p$ on $\G_{1,1}$ parametrized by $(x,X,g,z)$, and $\Rone$ for the corresponding action on $\G_1$. The above equation then means
\[
\Rtwo^*\alpha^\kappa_c=\alpha^\kappa_c.
\]
Analyzing this further, we have
\begin{alignat*}{2}
&{} & \Rtwo^*\alpha^\kappa_c&=\alpha^\kappa_c\\
&\iff\quad & \Rtwo^*\alpha^\kappa-Z^\kappa_j(g\ebo)^j&=\alpha^\kappa-z^\kappa_j(g\ebo)^j\\
&\iff\quad & \Rtwo^*\alpha^\kappa&=\alpha^\kappa+(Z^\kappa_j-z^\kappa_j)(g\ebo)^j,
\end{alignat*}
where $\Rtwo\cdot(x,X,g,z)=(\p(x), X, gh(x)^{-1}, Z)$.

%Since $\ak$ is a 1-form on $\G_1$ we can write the last equation above as
%\[
%\Rone^*\alpha^\kappa=\alpha^\kappa+(Z^\kappa_j-z^\kappa_j)(g\ebo)^j.
%\] 
%But notice that the left hand side above is independent of $z$ and therefore
%\beq\label{eq:Zz}
%Z^\kappa_j=z^\kappa_j+\zt^\kappa_j,
%\eeq
%where $\zt^\kappa_j$ is a complicated expression in the 2-jets, $j^2\p$, of $\p\in\G$. 

The section $$x\mapsto j^2\p\resx=(x,\p(x), h(x), z(x))$$ annihilates the contact forms $\ak_c=\ak-z^\kappa_j(g\ebo)^j$ so we have
\beq\label{eq:j2p}
j^2\p^*\ak=z^\kappa_j(x)(h(x)\ebo)^j.
\eeq
Also, the identity section of $\G_1$, $\one\resx^{(1)}=(x,x,I)$, pulls $\ak$ back by
\[
\left(\one^{(1)}\right)^*\ak=dI\cdot I^{-1}=0,
\]
and so, it prolongs to
\[
x\overset{\one^{(2)}}{\mapsto} (x,x,I,0).
\]
Now note that 
\[
\one^{(2)}\resx=\Rtwo\comp j^2\p\resx,
\]
and so we have, using (\ref{eq:j2p}), 
\begin{alignat*}{2}
&{} & (\Rtwo\comp j^2\p)^*\ak&=\left(\one^{(2)}\right)^*\ak\\
&\iff\quad  & j^2\p^*\Rtwo^*\ak&=0\\
&\iff\quad  & j^2\p^*(\alpha^\kappa+(Z^\kappa_j-z^\kappa_j)(g\ebo)^j)&=0\\
&\iff\quad  & \left(z^\kappa_j(x)+(Z^\kappa_j-z^\kappa_j)\right)(h(x)\ebo)^j&=0\\
&\iff\quad  & \left(z^\kappa_j(x)+(Z^\kappa_j-z^\kappa_j)\right)&=0\\
&\iff\quad  & Z^\kappa_j&=z^\kappa_j-z^\kappa_j(x).
\end{alignat*}

The above shows that if $j^2\p$ is the section $$x\mapsto (x,\p(x), h(x), z(x))$$ then
\[
\Rtwo\cdot(x,X,g,z)=\left(\p(x), X, g\left(h(x)\right)^{-1}, z-z(x)\right),
\] 
which implies, by acting on the identity section, that $j^2\p^{-1}\restrict{\p(x)}=(\p(x), x, h(x)^{-1}, -z(x))$. Importantly, we also obtain 
\beq\label{eq:Ralpha}
\Rone^*\alpha^\kappa=\alpha^\kappa-(Z^\kappa_j-z^\kappa_j)(g\ebo)^j=-z^\kappa_j(x)(g\ebo)^j.
\eeq
On the other hand, left multiplication is given by
\[
L_{j^2\p}\cdot (x,X,g,z)=\left(x,\p(X), h(X)g, z(X)+z\right).
\]

\begin{Rem}
It is rather remarkable that the groupoid actions work by addition/subtraction in the $z$ variables. This is, of course, different from the action in the standard jet coordinates where this action is affine. This abelian (Lie group) action allows us to continue the \emph{geometric} analysis of the system for $\G$. 
\end{Rem}

Recall that Algorithm \ref{ckalgo}, when computing the space $\G_{1,1}$ solved (\ref{eq:secg}), while Cartan's method computes the space $\Gt_{1,1}$ by solving (\ref{eq:carg}). We are assuming that no lower order invariants are found and hence that these systems can be completely solved. We solve for some $z$'s (call these principal) in terms of other $z$'s (call these parametric, the number of which being $r^2$). For the system (\ref{eq:carg}) we arrive at a solution (or an equation equivalent to (\ref{eq:carg})) we choose to write in the slightly unorthodox form
\beq\label{eq:Mv}
z=\pmb{P}z + \pmb{Q}c(x,g),
\eeq
where $z$ is a vector of the $z^\kappa_j$'s, $\pmb{P}$ and $\pmb{Q}$ are constant coefficient matrices whose rows we denote $P^\kappa_j$ and $Q^\kappa_j$, respectively, and $c(x,g)$ is a vector of some of the $C^i_{jk}(x,g)$ in (\ref{eq:carg}). In (\ref{eq:Mv}) the equation for a parametric $z^\kappa_j$ is simply $z^\kappa_j=z^\kappa_j$ while, if $z^\kappa_j$ is principal, $P^\kappa_j\boldsymbol{\cdot} z$ does not involve any other principal $z$'s (where $\boldsymbol{\cdot}$ is the Euclidean inner product). The reason for the specific form of (\ref{eq:Mv}) becomes clear ina  few paragraphs. For (\ref{eq:secg}) we find a solution (or an equation equivalent to (\ref{eq:secg}))
\beq\label{eq:Nb}
z=\pmb{P}z + \pmb{Q}\left(c(x,g)-b(X)\right),
\eeq
where $b$ is a vector of some of the $B^i_{jk}(X)$ that match the $C^i_{jk}$ present in $c(x,g)$. We note that since the identity map solves (\ref{eq:secg}) and since $j^2\one=(x,x,I,0)$, we have
\beq\label{eq:BeqC}
B^i_{jk}(x)=C^i_{jk}(x,I).
\eeq

The next step in Cartan's method is \emph{absorption of torsion}, i.e. we plug in the solution (\ref{eq:Mv}) into all the contact forms
\[
\alpha^\kappa_c=\alpha^\kappa-z^\kappa_j(g\ebo)^j
\]
to obtain one forms
\[
\alpha^\kappa-(Q_j^\kappa\boldsymbol{\cdot} c(x,g))(g\ebo)^j-(P_j^\kappa\boldsymbol{\cdot} v)(g\ebo)^j,
\]
where the $Q$'s and $P$'s are \emph{rows} in the matrices $\pmb{Q}$ and $\pmb{P}$, respectively, and $\boldsymbol{\cdot}$ is the inner product. Cartan then defines the forms
\[
\pi^\kappa:=\alpha^\kappa-(Q_j^\kappa\boldsymbol{\cdot} c(x,g))(g\ebo)^j,
\]
on $\G_1$ and \emph{prolongs} the original equivalence problem to an equivalence problem on $\G_1$ asking for a map $\Phi:\G_1\to\G_1$ (that fixes the target coordinate $X$, recall Remark \ref{rem:noX}) such that
\begin{align*}
\Phi^*(g\ebo)&=g\ebo,\quad\text{and}\\
\Phi^*\pi^\kappa&=\pi^\kappa+(P_j^\kappa\boldsymbol{\cdot} v)(g\ebo)^j.
\end{align*}

We can rewrite this as
\beq\label{eq:rphi}
\Phi^*\bbm g\ebo \\ \pmb{\pi} \ebm = \bbm I_n & 0 \\ \pmb{M}(v) & I_r \ebm\bbm g\ebo \\ \pmb{\pi} \ebm,
\eeq
where $\pmb{M}(v)$ is the $r\times n$ matrix with entries $P_j^\kappa\boldsymbol{\cdot} v$, $v$ is a vector of the same length as the $P$'s and the $I$'s are identity matrices. The matrices 
\[
\bbm I_n & 0 \\ \pmb{M}(v) & I_r \ebm
\]
form an $r^2$ dimensional \emph{abelian} Lie group, $G^{(2)}$, parametrized by the entries in $v$ that correspond to parametric $z$'s, and (\ref{eq:rphi}) describes a $G^{(2)}$-structure on $\G_1$. The group operation is
\[
\bbm I_n & 0 \\ \pmb{M}(v) & I_r \ebm\bbm I_n & 0 \\ \pmb{M}(y) & I_r \ebm=\bbm I_n & 0 \\ \pmb{P}(v+y) & I_r \ebm,
\]
and so $G^{(2)}\cong\R^{r^2}$. We claim that the equivalence problem (\ref{eq:rphi}) is equivalent to the system of differential equations $\G_{1,1}$ and we are therefore starting this process over at step (a) of Algorithm \ref{ckalgo}.  

\begin{theorem}
The differential equation $\G_{1,1}$, defined by $\p^*\ebo=g\ebo$, $g\in G$, and (\ref{eq:secg}), is equivalent to (\ref{eq:rphi}).
\end{theorem}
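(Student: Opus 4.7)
The plan is to establish a bijective correspondence between local solutions of the PDE system $\G_{1,1}$ and local solutions of the $G^{(2)}$-structure equivalence problem (\ref{eq:rphi}) on $\G_1$. The correspondence is furnished by the right groupoid action: to a 2-jet $j^2\p\resx = (x,\p(x), h(x), z(x)) \in \G_{1,1}$ I assign the map $\Phi := \Rone\colon \G_1 \to \G_1$, $(x,X,g)\mapsto (\p(x), X, gh(x)^{-1})$. The map $\Phi$ itself depends only on $j^1\p$, but the transformation law for $\pmb\pi$ under $\Phi^*$ carries the missing second-order data $z(x)$, so the two notions of ``solution'' have the same dimension count.

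For the forward direction, the identity $\Phi^*(g\ebo) = g\ebo$ is immediate from $\p^*\ebo=h(x)\ebo$ and the definition of the right action. For the transformation of $\pmb\pi$, I would use the (generic-point version of the) formula $\Rone^*\alpha^\kappa = \alpha^\kappa - z^\kappa_j(x)(g\ebo)^j$ obtained exactly as in the derivation of (\ref{eq:Ralpha}). Expanding $\pi^\kappa = \alpha^\kappa - (Q^\kappa_j\boldsymbol\cdot c(x,g))(g\ebo)^j$, the pullback $\Phi^*\pi^\kappa$ equals $\pi^\kappa$ plus a purely horizontal residual whose coefficients involve $z(x)$, $c(x,g)$ and $c(\p(x),gh(x)^{-1})$. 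Substituting the solution (\ref{eq:Nb}) of the prolonged equations and using (\ref{eq:BeqC}) to relate $b$ and $c$ on the identity section, this residual collapses to $(P^\kappa_j\boldsymbol\cdot v(x))(g\ebo)^j$, where $v(x)$ is the vector of parametric second-order coordinates of $j^2\p\resx$; this is exactly the right-hand side of (\ref{eq:rphi}).

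For the converse, since $\Phi$ fixes the target coordinate $X$ (Remark \ref{rem:noX}) and satisfies $\Phi^*(g\ebo)=g\ebo$, it must descend to a base map $\p$ with $\p^*\ebo=h(x)\ebo$, i.e., $\p\in\G$ and $\Phi=\Rone$ for this $\p$. Running the previous computation in reverse, the transformation law for $\pmb\pi$ determines a unique second-order section $z(x)$ compatible with $\Phi$, and comparing it with the formula for $\Rone^*\pi^\kappa$ forces $z(x)$ to satisfy (\ref{eq:Nb}), which is a reparameterization of (\ref{eq:secg}). Hence $j^2\p\in\G_{1,1}$.

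The main obstacle is reconciling Cartan's normalized prolongation (\ref{eq:carg}) with the genuine prolonged system (\ref{eq:secg}): the forms $\pi^\kappa$ are built from $c(x,g)$ and the constants $\pmb P, \pmb Q$ only, so the target-dependent torsion $b(X)=B(X)$ seems to drop out, and one has to explain how it still enters the equivalence problem. It reappears under $\Rone^*$ through the substitution $c\bigl(\p(x),\, gh(x)^{-1}\bigr)$ combined with the normalization (\ref{eq:BeqC}) on the identity section $\one$, and this interplay is what guarantees that the $G^{(2)}$-structure on $\G_1$ encodes exactly the same integrability data as the second-order prolongation $\G_{1,1}$.
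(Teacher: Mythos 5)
Your proposal is correct and follows essentially the same route as the paper's proof: both identify $\Phi$ with the right groupoid action $\Rone$ via the condition $\Phi^*(g\ebo)=g\ebo$, then use the pullback formula (\ref{eq:Ralpha}) together with (\ref{eq:BeqC}) and the solved forms (\ref{eq:Mv})--(\ref{eq:Nb}) to match the transformation law of $\pmb{\pi}$ against the second-order equations defining $\G_{1,1}$. The only difference is presentational: you argue the two implications in the opposite order from the paper, which proves the direction from (\ref{eq:rphi}) to $\G_{1,1}$ in detail and treats the other as immediate.
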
 

\begin{proof}
First let's prove that $\Phi=R_{j^1\p}$ for some local diffeomorphism $\p$. We have
\[
\Phi(x,X,g)=(\p(x,X,g), X, \psi(x,X,g))
\]
so
\begin{alignat*}{2}
&{} & \Phi^*(g\ebo)&=g\ebo\\
&\iff\quad  & \psi(\p^*\ebo)&=g\ebo\\
&\iff\quad  & \psi( h\ebo)&=g\ebo\\
&\iff\quad  & \psi(x,X,g)&=gh^{-1},
\end{alignat*}
for some $h\in\text{GL}(n)$. The second equation above implies that $\p=\p(x)$ is a function of $x$ only, and so
\[
\Phi(x,X,g)=(\p(x), X, gh^{-1}),\quad\text{where}~~\p^*\ebo=h\ebo,
\]
which means that $\Phi=R_{j^1\p}$. Let
\[
j^2\p\restrict{x}=(x, \p(x), h(x), z(x))~~\Rightarrow~~ j^2\p^{-1}\restrict{\p(x)}=(\p(x), x, h(x)^{-1}, -z(x)).
\]

Next, recall (\ref{eq:Ralpha}) where we found that
\[
\Rone^*\alpha^\kappa=\alpha^\kappa-z^\kappa_j(x)(g\ebo)^j,
\]
and
\[
\Rtwo\cdot(x,X,g,z)=\left(\p(x), X, g\left(h(x)\right)^{-1}, z-z(x)\right).
\]
The condition $\Rone^*\pmb{\pi}=\pmb{\pi}+\pmb{M}(v)(g\ebo)$ can be written
\begin{alignat*}{2}
&{} & \Rone^*\pi^\kappa&=\pi^\kappa+(P^\kappa_j\boldsymbol{\cdot}v)(g\ebo)^j\\
&\iff~~  & \Rone^*\left(\alpha^\kappa-(Q^\kappa_j\boldsymbol{\cdot}c)(g\ebo)^j\right)&=\alpha^\kappa-(Q^\kappa_j\boldsymbol{\cdot}c)(g\ebo)^j+(P^\kappa_j\boldsymbol{\cdot}v)(g\ebo)^j\\
&\iff~~  & \alpha^\kappa-z^\kappa_j(x)(g\ebo)^j-(Q^\kappa_j\boldsymbol{\cdot}c_\tau)(g\ebo)^j&=\alpha^\kappa-(Q^\kappa_j\boldsymbol{\cdot}c_\sigma)(g\ebo)^j+(P^\kappa_j\boldsymbol{\cdot}v)(g\ebo)^j,
\end{alignat*}

where $c_\tau=c\comp j^2\p^{-1}\restrict{x}=c(\p(x), gh(x)^{-1})$ and $c_\sigma=c(x, g)$. The above equations imply that for $z^\kappa_j$ parametric we find
\[
-z^\kappa_j(x)=v^\kappa_j
\]
and hence that $\p^{-1}$ satisfies the second order equations
\[
-z(x)=-\pmb{P}z(x)+\pmb{Q}(c(\p(x), gh(x)^{-1})-c(x,g)). 
\]
Setting $g=I$ this becomes
\[
-z(x)=-\pmb{P}z(x)+\pmb{Q}(c(\p(x), h(x)^{-1})-c(x,I)). 
\]
This means that $\p$ satisfies the second order equation
\[
z(x)=\pmb{P}z(x)+\pmb{Q}(c(x, h(x))-c(\p(x),I))
\]
but since $b(\p(x))=c(\p(x),I)$ (cf. (\ref{eq:BeqC})) we have shown that solutions to (\ref{eq:rphi}) are solutions to $\G_{1,1}$. The converse follows immeadiately from  (\ref{eq:Ralpha}) and the second order determining equations for $\G_{1,1}$, (\ref{eq:Nb}).
\end{proof}

We have thus proven that the prolonged equivalence problem is equivalent to the standard prolongation of the determining equations $\G_1$ to $\G_{1,1}$. We have already shown that, at the first step of both processes, Algorithm \ref{ckalgo} and Cartan's equivalence method compute the same integrability conditions, the number of free jet variables of the next order and reduced Cartan characters. Assuming that all the equivalence problems arrived at during Cartan's method are of constant type, since Algorithm \ref{ckalgo} terminates at involution, so must Cartan's equivalence method. 

%\begin{itemize}
%\item The number of third order parametric derivatives, $r^{(3)}$, is the number of free $w$'s in the prolonged equations, on which the two methods agree.
%\item Integrability conditions/lower order invariants found by the two methods will differ in functions of target coordinates.
%\item If no lower order invariants are found, Cartan's method and Algorithm \ref{ckalgo} will compute the same reduced Cartan charaters and so either both will satisfy the involutivity test or neither will.
%\item If the involutivity test fails we prolong once again, taking as coframe $g\ebo$, $\pmb{\alpha}$ and the $dz_\tau$. Note that $\Rtwo^*dz_\tau=dz_tau+\text{horizontal forms}$ so the prolongation procedure just described can continue. The non-normalized $w$'s will parametrize the new structure group.
%\end{itemize}
%All of the above is subject to the same regularity conditions as before; the second order invariants found after solving for the $w$'s must be \emph{full rank} in the $z$'s and the action on their range by the structure group $\R^{r^{(2)}}$ is transitive (i.e. the equivalence problem is of constant type). If this is not the case we will find an invariant depending only on $(x,g)$ but in our present situation of (\ref{eq:rone}) these represent \emph{genuine invariants} and prohibit the reduction of structure group. 

\begin{Rem}
One possible outcome of Cartan's equivalence method is when we manage to normalize \emph{all} group parameters at some order of prolongation. This means that the corresponding differential equation is \emph{maximally overdetermined} at that order. After reduction of the structure group we are then faced with a bona-fide coframe on some space whose symmetry group is $\G$. Cartan solved completely the general equivalence problem for coframes and so we can also consider the equivalence problem solved in this case.
\end{Rem}

\begin{theorem}[Termination of Cartan's equivalence method]
For equivalence problems of constant type and for which we can never normalize all group parameters (at each order of prolongation), Cartan's equivalence method terminates at involution.
\end{theorem}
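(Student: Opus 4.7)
The plan is to reduce everything to the already-established termination of Algorithm \ref{ckalgo}, using the step-by-step correspondence between Cartan's method and the Cartan-Kuranishi algorithm that was developed throughout Section \ref{CE}. Concretely, I would view the equivalence problem $\p^*\ebo=g\ebo$ as a first-order, genuine differential equation $\G_1\subset J^1(\R^n\times\R^n)$ on local diffeomorphisms, and run Algorithm \ref{ckalgo} on it in parallel with Cartan's method on $\ebo$, arguing by induction on the number of completed passes that the two processes produce the same intermediate data and hence terminate simultaneously.

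For the induction I would package together three correspondences already established in the preceding subsections. First, any lower-order invariant $H(x,g)=J(X)$ arising from step (a) of Algorithm \ref{ckalgo} is, under the constant-type hypothesis, equivalent to the reduced $G_b$-structure equivalence problem $\p^*(g(x)\ebo)=\bar h(g(x)\ebo)$; this was the content of the reduction-of-structure-group argument culminating in equation (\ref{eq:HpI}). Second, the number $r^2$ of free parameters in Cartan's normalized absorption equations (\ref{eq:carg}) equals the fibre dimension of $\G_{1,1}\to\G_1$, and the reduced Cartan characters computed from (\ref{eq:Fcon}) agree with those of Definition \ref{regular}, as shown in the involution subsection. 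Third, when the involutivity test fails, the theorem just proved identifies Cartan's prolonged $G^{(2)}$-structure with the prolonged differential equation $\G_{1,1}$. Together these three facts show that after each Cartan step there is a differential equation $\Rr_q$ satisfying exactly the invariant $(r^{q+1},\,s^q_i,\,\text{integrability conditions})$ computed by Cartan's method.

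The regularity hypothesis of Definition \ref{regular} needs to be verified at each stage for Algorithm \ref{ckalgo} to apply. Here the paper's remark noting that the coefficients $F^{ij}_\kappa$ in $dg\cdot g^{-1}=E(\alpha^1,\ldots,\alpha^r)$ are constant does the work: the reduced Cartan characters of the constructed systems depend only on the $F^{ij}_\kappa$ and on the rank structure determined by the (constant) Lie algebra of the current structure group, so they are automatically constant. Combined with the constant-type hypothesis, which ensures every reduction step is well defined and each appearing system is a submanifold, this gives regularity for each $\Rr_q$ appearing in the process.

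With the correspondence and regularity in hand, termination is immediate. Algorithm \ref{ckalgo} is known to terminate for a regular equation: the Hilbert basis theorem applied to the symbol module bounds the number of passes producing integrability conditions, and the existence of $\delta$-regular coordinates (\cite[Theorem 4.3.15]{s10}) guarantees that Cartan's test in step (c) eventually succeeds. By the inductive correspondence, Cartan's equivalence method halts at the same stage with an involutive $G^{(N)}$-structure. The hypothesis that we never normalize all group parameters rules out the degenerate case where the structure group collapses to $\{I\}$ and one is left with a bare coframe, which is handled separately by Cartan's classical solution of the coframe equivalence problem; the only mildly delicate point is checking that the invariants $H(x,g)$ at each stage remain full rank in the group parameters so that the reduction-of-structure-group step can be carried out, and this is precisely what the constant-type assumption provides.
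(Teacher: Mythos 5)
Your proposal matches the paper's own argument essentially verbatim: the paper likewise derives the theorem by assembling the three correspondences established in Section \ref{CE} (reduction of structure group versus lower-order integrability conditions, equality of $r^{2}$ and of the reduced Cartan characters, and the equivalence of the prolonged $G^{(2)}$-structure with $\G_{1,1}$), notes that constancy of the $F^{ij}_\kappa$ gives the regularity needed for Algorithm \ref{ckalgo}, and concludes termination from the Hilbert basis theorem and the existence of $\delta$-regular coordinates. No substantive differences to report.
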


\end{subsection}

%\begin{subsection}{Final remarks}
%We have shown how Cartan's equivalence method relates to Cartan-Kuranishi completion. In the companion paper, \cite{IMF}, we shall explicitly relate Cartan's method to the method of the equivariant moving frame for pseudo-groups. Once recast in the language of the moving frame we will no longer need the reduction of structure group technique but instead apply the \emph{recurrence formula}, developed in \cite{olver08moving}. This will free us from the quite severe condition of constant type problems, relying rather on Definition \ref{regular}. Another advantage of the moving frame approach is the fact that we do not need to concoct a $G$-structure for our problem (which is not always obvious) but are handed one for free by the Maurer-Cartan forms on Lie pseudo-groups, mentioned in the first subsection of this section. 
%\end{subsection}

\end{section}

\newpage
\bibliographystyle{plain}

\bibliography{./TerminateCartan.bbl}

\end{document}